
\documentclass[final]{siamltex}


\usepackage{amsfonts}
\usepackage{amsmath}
\usepackage{amssymb}
\usepackage{color}
\usepackage{graphicx}
\usepackage{times}
\usepackage[round]{natbib}
\usepackage[T1]{fontenc}


\setlength {\oddsidemargin} {0.6875in}
\setlength {\evensidemargin} {0.6875in}



\newcommand {\Vector} {\mbox {vec}}
\newcommand {\Matrix} {\mbox {mat}}

\newcommand {\col} {\mbox {\rm col}\hspace{0.05em}}
\newcommand {\DA} {\Delta A}

\newcommand {\Db} {\Delta b}
\newcommand {\Dd} {\Delta d}
\newcommand {\Ds} {\Delta s}
\newcommand {\Du} {\Delta u}
\newcommand {\Dv} {\Delta v}
\newcommand {\Dx} {\Delta x}
\newcommand {\Dy} {\Delta y}

\newcommand {\reals} {{\mathbb R}}

\newcommand {\sigmamin} {\sigma_{\min}}

\newcommand {\sigmamina} {\sigma_{\min}}
\newcommand {\strutwidth} {0pt}
\newcommand {\sub} [1] {_{\mbox {\scriptsize \rm #1}}}

\newcommand {\tr} {\mathop {\mbox {\rm tr}} \kern 0.1em}
\newcommand {\setarray} {\relax}
\newcommand {\onetwo} [2] {\left[ \setarray \begin {array} {c c} #1& #2 \end {array} \right]}

\newcommand {\scaleA} {\mathcal {A}}
\newcommand {\scaleB} {\mathcal {B}}
\newcommand {\scaleX} {\mathcal {X}}

\newcommand {\vds} {\mbox {\bfseries \scshape v}}
\newcommand {\boldtheta} {{\boldsymbol \theta}}
\newcommand {\boldkappa} {{\boldsymbol \kappa^{}_{\boldsymbol 2}}}
\newcommand {\boldkappasquared} {{\boldsymbol \kappa^{2}_{\boldsymbol 2}}}

\newcommand {\JxA} {J_x [\Vector (A)]}
\newcommand {\Jxb} {J_x(b)}

\newcommand {\X} {X}

\title{Condition Number of\\ Full Rank Linear least-squares Solutions}

\title{Spectral Condition Numbers for\\ Full Rank Linear least-squares Solutions}

\title{Bounds for Spectral Condition Numbers of\\ Full Rank Linear least-squares Solutions}

\title{Spectral Condition Numbers of\\ Full Rank Linear least-squares Solutions}

\title{Nuclear Norms of Rank 2 Matrices for Spectral Condition Numbers of Full Rank Linear Least Squares Solutions}


\author{Joseph F. Grcar\thanks{6059 Castlebrook Drive, Castro Valley, California 94552 USA (jfgrcar@comcast.net).}}

\begin{document}

\maketitle

\begin{abstract}
The condition number of solutions to full rank linear least-squares problem are shown to be given by an optimization problem that involves nuclear norms of rank 2 matrices. The condition number is with respect to the least-squares coefficient matrix and 2-norms. It depends on three quantities each of which can contribute  ill-conditioning. The literature presents several estimates for this condition number with varying results; even standard reference texts contain serious overestimates. The use of the nuclear norm affords a single derivation of the best known lower and upper bounds on the condition number and shows why there is unlikely to be a closed formula. 
\end{abstract}

\begin{keywords} 
linear least-squares, condition number, applications of functional analysis, nuclear norm, trace norm
\end{keywords}

\begin{AMS}
65F35, 62J05, 15A60
\end{AMS}


\pagestyle{myheadings}
\thispagestyle{plain}
\markboth{JOSEPH F. GRCAR}{LINEAR LEAST SQUARES SOLUTION}

\section {Introduction}

\subsection {Purpose}

Linear least-squares problems, in the form of statistical regression analyses, are a basic tool of investigation in both the physical and the social sciences, and consequently they are an important computation. 

This paper develops a single methodology that determines tight lower and upper estimates of condition numbers for several problems involving linear least-squares. The condition numbers are with respect to the matrices in the problems and scaled $2$-norms. The problems are: orthogonal projections and least-squares residuals \citep {Grcar2010f}, minimum $2$-norm solutions of underdetermined equations \citep {Grcar2010b}, and in the present case, the solution of overdetermined equations
\begin {equation}
\label {eqn:LLS}
x = \arg \min_u \| b - A u \|_2 \quad \Rightarrow \quad A^t A x = A^t b \, ,
\end {equation}
where $A$ is an $m \times n$ matrix of full column rank (hence, $m \ge n$). Some presentations of error bounds contain formulas that can severely overestimate the condition number, including the SIAM documentation for the LAPACK software.

This introduction provides some background material. Section \ref {sec:definition} discusses definitions of condition numbers. Section \ref {sec:brief} describes the estimate and provides an example; this material is appropriate for presentation in class. Section \ref {sec:derivation} proves that the condition number varies from the estimate within a factor of $\sqrt 2$. The derivation relies on a formula for the nuclear norm of a matrix. (This norm is the sum of the singular values including multiplicities, and is also known as the trace norm.) Section \ref {sec:comparison} examines overestimates in the literature. Section \ref {sec:advanced} evaluates the nuclear norm of rank $2$ matrices (lemma \ref {lem:rank2}). 

\subsection {Prior Work}

Ever since \citet {Legendre1805-HM} and \citet {Gauss1809-HM} invented the method of least-squares, the problems had been solved by applying various forms of elimination to the normal equations, $A^t A x = A^t b$ in equation (\ref {eqn:LLS}). Instead, \citet {Golub1965} suggested applying Householder transformations directly to $A$, which removed the need to calculate $A^t A$. However, \citet [p.\ 144] {GolubWilkinson1966} reported that $A^tA$ was still ``relevant to some extent'' to the accuracy of the calculation because they found that $A^tA$ appears in a bound on perturbations to $x$ that are caused by perturbations to $A$. Their discovery was ``something of a shock'' \citep [p.\ 241] {vanderSluis1975}. 

The original error bound of \citet [p.\ 144, eqn.\ 43] {GolubWilkinson1966} was difficult to interpret because of an assumed scaling for the problem. \citet [pp.\ 15, 17, top] {Bjorck1967a} derived a bound by the augmented matrix approach that was suggested to him by Golub. \citet [pp.\ 224--226] {Wedin1973} re-derived the bound from his study of the matrix pseudoinverse and exhibited a perturbation to the matrix that attains the leading term. Van der Sluis (1975, p.\ 251, eqn.\ 5.8) also derived Bj\"orck's bound and introduced a simplification of the formula and a geometric interpretation of the leading term. \citet [p.\ 31, eqn.\ 1.4.28] {Bjorck1996} later followed Wedin in basing the derivation of his bound on the pseudoinverse. \citet [p.\ 1189, eqn.\ 2.4 and line --6] {Malyshev2003} derived a lower bound for the condition number thereby proving that his formula and the coefficient in Bj\"orck's bound are quantifiably tight estimates of the spectral condition number. In contrast, condition numbers with respect to Frobenius norms have exact formulas that have been given in various forms by \citet {Geurts1982}, \citet {Gratton1996}, and \citet {Malyshev2003}.

\section {Condition numbers}
\label {sec:definition}

\subsection {Error bounds and definitions of condition numbers}

The oldest way to derive perturbation bounds is by differential calculus. If $y = f(x)$ is a vector valued function of the vector $x$ whose partial derivatives are continuous, then the partial derivatives give the best estimate of the change to $y$ for a given change to $x$
\begin {equation}
\label {eqn:approximation-1}
\Dy = f(x + \Dx) - f(x) \approx J_f (x) \, \Dx
\end {equation}
where $J_f (x)$ is the Jacobian matrix of the partial derivatives of $y$ with respect to $x$. The magnitude of the error in the first order approximation (\ref {eqn:approximation-1}) is bounded by Landau's little $o ( \| \Dx \| )$ for all sufficiently small $\| \Dx \|$.\footnote {The continuity of the partial derivatives establishes the existence of the Fr\'echet derivative and its representation by the Jacobian matrix. The definition of the Fr\'echet derivative is responsible for the error in equation (\ref {eqn:approximation-1}) being $o ( \| \Dx \| )$. The order of the error terms is independent of the norm because all norms for finite dimensional spaces are equivalent \citep [p.\ 54, thm.\ 1.7] {Stewart1990}.} Thus $J_f (x) \, \Dx$ is the unique linear approximation to $\Dy$ in the vicinity of $x$.\footnote {Any other linear function added to $J_f (x) \, \Dx$ differs from $\Dy$ by ${\mathcal O} (\| \Dx \|)$ and therefore does not provide a $o ( \| \Dx \| )$ approximation.} Taking norms produces a perturbation bound,
\begin {equation}
\label {eqn:calculus-1}
\| \Dy \| \le \| J_f (x) \| \, \| \Dx \| + o (\| \Dx \|) \, .
\end {equation}
Equation (\ref {eqn:calculus-1}) is the smallest possible bound on $\| \Dy \|$ in terms of $\| \Dx \|$ provided the norm for the Jacobian matrix is induced from the norms for $\Dy$ and $\Dx$. In this case for each $x$ there is some $\Dx$, which is nonzero but may be chosen arbitrarily small, so the bound (\ref {eqn:calculus-1}) is attained to within the higher order term, $o (\| \Dx \|)$. There may be many  ways to define condition numbers, but because equation (\ref {eqn:calculus-1}) is the smallest possible bound, any definition of a condition number for use in bounds equivalent to (\ref {eqn:calculus-1}) must arrive at the same value, $\chi_y (x) = \| J_f (x) \|$.\footnote {A theory of condition numbers in terms of Jacobian matrices was developed by \citet [p.\ 292, thm.\ 4] {Rice1966}. Recent presentations of the formula $\chi_y (x) = \| J_f (x) \|$ are given by \citet [p.\ 44] {Chatelin1996}, \citet [p.\ 27] {Deuflhard2003}, \citet [p.\ 39] {Quarteroni2000}, and \citet [p.\ 90] {Trefethen1997}.} The matrix norm may be too complicated to have an explicit formula, but tight estimates can be derived as in this paper. 

\subsection {One or separate condition numbers}
\label {sec:separate}

Many problems depend on two parameters $u$, $v$ which may consist of the entries of a matrix and a vector (for example). In principle it is possible to treat the parameters altogether.\footnote {As will be seen in table \ref {tab:LLS}, \citet {Gratton1996} derived a joint condition number of the least-squares solution with respect to a Frobenius norm of the matrix and vector that define the problem.}  A condition number for $y$ with respect to joint changes in $u$ and $v$ requires a common norm for perturbations to both. Such a norm is 
\begin {equation}
\label {eqn:joint-norm}
\max \big\{ \| \Du \|, \, \| \Dv \| \big\} \, .
\end {equation}
A single condition number then follows that appears in an optimal error bound,
\begin {equation}
\label {eqn:single}
\| \Dy \| \le   \| J_f (u, v) \| \, \max \big\{ \| \Du \|, \, \| \Dv \| \big\} + o \left(\max \big\{ \| \Du \|, \, \| \Dv \| \big\} \right) .
\end {equation}
The value of the condition number is again $\chi_y(u, v) = \| J_f (u, v) \|$ where the matrix norm is induced from the norm for $\Dy$ and the norm in equation (\ref {eqn:joint-norm}). 

Because $u$ and $v$ may enter into the problem in much different ways, it is customary to treat each separately.  This approach recognizes that the Jacobian matrix is a block matrix
\begin {displaymath}
J_f (u, v) = \onetwo {J_{f_1} (u)} {J_{f_2} (v)}
\end {displaymath}
where the functions $f_1 (u) = f(u, v)$ and $f_2(v) = f(u, v)$ have $v$ and $u$ fixed, respectively. 
The first order differential approximation (\ref {eqn:approximation-1}) is unchanged but is rewritten with separate terms for $u$ and $v$,
\begin {equation}
\label {eqn:approximation-2}
\Dy \approx J_{f_1} (u) \, \Du + J_{f_2} (v) \, \Dv \, .
\end {equation}
Bound (\ref {eqn:single}) then can be weakened by applying norm inequalities,
\begin {eqnarray}
\nonumber
\| \Dy \|& \le& \| J_{f_1} (u)  \Du + J_{f_2} (v) \Dv \| + o \left(\max \big\{ \| \Du \|, \, \| \Dv \| \big\} \right)\\
\noalign {\smallskip}
\label {eqn:double}
& \le& \left( \strut \| J_{f_1} (u) \| + \| J_{f_2} (v) \| \right) \, \max \big\{ \| \Du \|, \, \| \Dv \| \big\}\\
\nonumber 
&& \hspace* {12em} {} + o \left(\max \big\{ \| \Du \|, \, \| \Dv \| \big\} \right) \, .
\end {eqnarray}
The coefficients $\chi_y(u) = \| J_{f_1} (u) \|$ and $\chi_y(v) = \| J_{f_2} (v) \|$ are the separate condition numbers of $y$ with respect to $u$ and $v$, respectively. 

These two different approaches lead to error bounds (\ref {eqn:single}, \ref {eqn:double}) that differ by at most a factor of $2$ because it can be shown \citep {Grcar2010f}
\begin {equation}
\label {eqn:sum-6}
{\chi_y (u) + \chi_y (v) \over 2} \le \chi_y (u, v) \le \chi_y (u) + \chi_y (v) \, .
\end {equation}
Thus, for the purpose of deriving tight estimates of joint condition numbers, it suffices to consider $\chi_y (u)$ and $\chi_y (v)$ separately. 

\section {Conditioning of the least-squares solution}
\label {sec:brief}

\subsection {Reason for considering matrices of full column rank}
\label {sec:reason}

The linear least-squares problem (\ref {eqn:LLS}) does not have an unique solution when $A$ does not have full column rank. A specific $x$ can be chosen such as the one of minimum norm. However, small changes to $A$ can still produce large changes to $x$.\footnote {If $A$ does not have full column rank, then for every nonzero vector $z$ in the right null space of the matrix, $(A + b z^t) (z / z^t z) = b$. Thus, a change to the matrix of norm $\| b \|_2 \, \|z \|_2$ changes the solution from $A^\dag b$ to $z /\| z \|_2^2$.} In other words, a condition number of $x$ with respect to rank deficient $A$ does not exist or is ``infinite.'' Perturbation bounds in the rank deficient case can be found by restricting changes to the matrix, for which see \citet [p.\ 30, eqn.\ 1.4.26] {Bjorck1996} and \citet [p.\ 157, eqn.\ 5.3] {Stewart1990}. That theory is beyond the scope of the present discussion.

\subsection {The condition numbers}
\label {sec:brief-1}

This section summarizes the results and presents an example. Proofs are in section \ref {sec:derivation}. It is assumed that $A$ has full column rank and the solution $x$ of the least-squares problem (\ref {eqn:LLS}) is not zero. The solution is proved to have a condition number $\chi_x(A)$ with respect to $A$ within the limits,
\begin {equation}
\label {eqn:chixA}
\fbox {$\boldkappa \, \sqrt { \strut [\vds  \tan (\boldtheta)]^2 + 1} $} \; \le \; \chi_x (A) \; \le \; \fbox {$\boldkappa [\vds \tan (\boldtheta) + 1]$} \, ,
\end {equation}
where $\boldkappa$, $\vds$, and $\boldtheta$ are defined below; they are bold to emphasize they are the values in the tight estimates of the condition number. There is also condition number with respect to $b$,
\begin {equation}
\label {eqn:chixb}
\chi_x (b) = \fbox {$\vds \sec (\boldtheta)$} \, .
\end {equation}
These condition numbers $\chi_x(A)$ and $\chi_x (b)$ are for measuring the perturbations to $A$, $b$, and $x$ by the following scaled $2$-norms,
\begin {equation}
\label {eqn:specific-scaled-norms}
{\| \DA \|_2 \over \| A \|_2} \, ,
\qquad
{\| \Db \|_2 \over \| b \|_2} \, ,
\qquad
{\| \Dx \|_2 \over \| x \|_2} \, .
\end {equation}
Like equation (\ref {eqn:double}), the two condition numbers appear in error bounds of the form,\footnote {The constant denominators $\| A \|_2$ and $\| b \|_2$ could be discarded from the $o$ terms because only the order of magnitude of the terms is pertinent.} 
\begin {equation}
\label {eqn:error-bound}
{\| \Dx \|_2 \over \| x \|_2} \le \chi_x (A) {\| \DA \|_2 \over \| A \|_2} + \chi_x (b) {\| \Db \|_2 \over \| b \|_2} + o \left( \max \left\{ {\| \DA \|_2 \over \| A \|_2}, \, {\| \Db \|_2 \over \| b \|_2} \right\} \right) ,
\end {equation}
where $x + \Dx$ is the solution of the perturbed problem,
\begin {equation}
\label {eqn:perturbed-problem}
x + \Dx = \arg \min_u \| (b + \Db) - (A + \DA) u \|_2 \, .
\end {equation}

The quantities $\boldkappa$, $\vds$, and $\boldtheta$ in the formulas (\ref {eqn:chixA}, \ref {eqn:chixb}) are
\begin {equation}
\label {eqn:three}
\boldkappa = {\| A \|_2 \over \sigmamina} \qquad 
\vds = {\| A x \|_2 \over \| x \|_2 \, \sigmamina} \qquad
\tan (\boldtheta) = {\| r \|_2 \over \| A x \|_2}
\end {equation}
where $\boldkappa$ is the spectral matrix condition number of $A$ ($\sigmamin$ is the smallest singular value of $A$), $\vds$ is van der Sluis's ratio between $1$ and $\boldkappa$,\footnote {The formulas of \citet [p.\ 251] {vanderSluis1975} contain in his notation $R(x_0) / \sigma_n$, which is the present $\vds$.} $\boldtheta$ is the angle between $b$ and $\col (A)$,\footnote {The notation $\col (A)$ is the column space of $A$.} and $r = b - Ax$ is the least-squares residual. 
\smallskip
\begin {enumerate}
\item $\boldkappa$ depends only on the extreme singular values of $A$.
\item $\boldtheta$ depends only on the ``angle of attack'' of $b$ with respect to $\col (A)$.
\item If $A$ is fixed, then $\vds$ depends on the orientation of $b$ to $\col (A)$ but not on $\boldtheta$.\footnote {Because $A$ has full column rank, $Ax$ and $x$ can only vary proportionally when their directions are fixed.}
\end {enumerate}
\smallskip
Please refer to Figure \ref {fig:schematic} as needed. 
If $\col (A)$ is fixed, then $\boldkappa$ and $\vds$ depend only on the singular values of $A$, and $\boldtheta$ depends only on the orientation of $b$. Thus, $\boldkappa$ and $\boldtheta$ are separate sources of ill-conditioning for the solution. If $Ax$ has comparatively large components in singular vectors corresponding to the largest singular values of $A$, then $\vds \approx \boldkappa$ and the condition number $\chi_x (A)$ depends on $\boldkappasquared$ which was the discovery of \citet {GolubWilkinson1966}. Otherwise, $\boldkappasquared$ ``plays no role'' \citep [p.\ 251] {vanderSluis1975}. 

\begin {figure} 
\centering 
\includegraphics [scale=1] {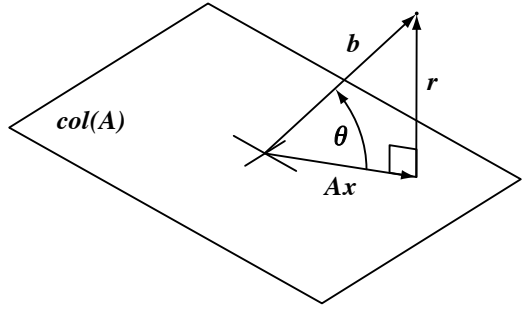}
\caption {Schematic of the least-squares problem, the projection $Ax$,  and the angle $\boldtheta$ between $Ax$ and $b$.}
\label {fig:schematic}
\end {figure}

\subsection {Conditioning example}
\label {sec:example}

This example illustrates the independent effects of $\boldkappa$, $\vds$, and $\boldtheta$ on $\chi_x (A)$. It is based on the example of \citet [p.\ 238] {Golub1996}.  Let
\begin {displaymath}
A = \left[ \begin {array} {c c} 1& 0\\ 0& \alpha\\ 0& 0\\ \end {array}
\right] ,
\quad 
b = \left[ \begin {array} {c} \beta \cos (\phi)\\ \beta \sin (\phi) \\ 1 \end {array} \right] \, ,
\quad 
\DA = \left[ \begin {array} {c c} 0& 0\\ 0& 0\\ 0& \epsilon \end {array}
\right] .
\end {displaymath}
where $0 < \epsilon \ll \alpha, \beta < 1$.
In this example, 
\begin {displaymath}
x = \left[ \begin {array} {c} \beta \cos (\phi) \\ {\beta \vphantom {(} \over \vphantom {(} \alpha} \sin (\phi) \\ \end {array} \right] ,
\quad 
r = \left[ \begin {array} {c} 0\\ 0\\ 1 \end {array} \right] ,
\quad 
x + \Dx  = \left[ \begin {array} {c} \beta \cos (\phi) \\[0.1ex] {\epsilon + \alpha \beta \sin (\phi) \vphantom {(} \over \vphantom {(}\alpha^2 + \epsilon^2} \\ \end {array} \right] \, .
\end {displaymath} 
The three terms in the condition number are
\begin {displaymath}
\boldkappa = {1 \over \alpha} \, , \qquad 
\vds = {1 \over \sqrt {[\alpha \cos (\phi)]^2 + [\sin (\phi)]^2}} \, , \qquad
\tan (\boldtheta) = {1 \over \beta} \, .
\end {displaymath}
These values can be independently manipulated by choosing $\alpha$, $\beta$, and $\phi$.  The tight upper bound for the condition number is
\begin {displaymath}
\chi_x (A) \le {1 \over \alpha} \left ( {1 \over \beta \sqrt {[\alpha \cos (\phi)]^2 + [\sin (\phi)]^2}} + 1 \right) \, .
\end {displaymath}
The relative change to the solution of the example
\begin {displaymath}
{\| \Dx \|_2 \over \| x \|_2} 
= {\epsilon \over \alpha \, \beta \,  \sqrt {[\alpha \cos (\phi)]^2 + [\sin (\phi)]^2}} + {\mathcal O} (\epsilon^2) \, .
\end {displaymath}
is close to the bound given by the condition number estimate and the relative change to $A$.

\section {Derivation of the condition numbers}
\label {sec:derivation}

\subsection {Notation}

The formula for the Jacobian matrix $J_x (b)$ of the solution $x = (A^t A)^{-1} b$ with respect to $b$ is clear.\footnote {The notation of section \ref {sec:separate} would introducing a name, $f_2$, for the function by which $x$ varies with $b$ when $A$ is held fixed, $x = f_2 (b)$, so that the notation for the Jacobian matrix is then $J_{f_2} (b)$. This pedantry will be discarded here to write $J_x(b)$ with $A$ held fixed; and similarly for $J_x(A)$ with $b$ held fixed.} For derivatives with respect to the entries of $A$, it is necessary to use the ``$\Vector$'' construction to order the matrix entries into a column vector; $\Vector (B)$ is the column of entries $B_{i,j}$ with $i,j$ in co-lexicographic order.\footnote {The alternative to placing the entries of matrices into column vectors is to use more general linear spaces and the Fr\'echet derivative. That approach seems unnecessarily abstract because the spaces have finite dimension.} The approximation is then
\begin {equation}
\label {eqn:total-differential}
\Dx = \JxA \, \Vector (\DA) + \Jxb \, \Db + \mbox {higher order terms}
\end {equation}
and upon taking norms
\begin {equation}
\label {eqn:differential-bound}
\| \Dx \| \le \underbrace {\| \JxA \|}_{\displaystyle \chi_x (A)} \, \| \DA \| + \underbrace {\displaystyle \| \Jxb \|}_{\displaystyle \chi_x (b)} \, \| \Db \| + o( \max \{ \| \DA \|, \, \| \Db \| \} ) \, ,
\end {equation}
where it is understood the norms on the two Jacobian matrices are induced from the following norms for $\DA$, $\Db$, and $\Dx$.

\subsection {Choice of Norms}
\label {sec:norms}

Equation (\ref {eqn:differential-bound}) applies for any choice of norms. In theoretical numerical analysis especially for least-squares problems the spectral norm is preferred. For $2$-norms the matrix condition number of $A^t A$ is the square of the matrix condition number of $A$. The norms used in this paper are thus,
\begin {equation}
\label {eqn:norms}
\| \Vector (\DA) \|_\scaleA  = {\| \DA \|_2 \over \scaleA} \, ,
\qquad
\| \Db \|_\scaleB = {\| \Db \|_2 \over \scaleB} \, ,
\qquad
\| \Dx \|_\scaleX = {\| \Dx \|_2 \over \scaleX} \, ,
\end {equation}
where $\scaleA$, $\scaleB$, $\scaleX$ are constant scale factors. These formulas define norms for $m \times n$ matrices, for $m$ vectors, and for $n$ vectors. The scaling makes the size of the changes relative to the problem of interest. The scaling used in equations (\ref {eqn:chixA}--\ref {eqn:specific-scaled-norms}) is
\begin {equation}
\label {eqn:scale-factors}
\scaleA = \| A \|_2 \, ,
\qquad
\scaleB = \| b \|_2 \, ,
\qquad
\scaleX = \| x \|_2 \, .
\end {equation}

\subsection {Condition number of {\itshape x\/} with respect to {\itshape b\/}} 
\label {sec:conditionwrtb}

From $x = (A^t A)^{-1} A^t b$ follows $\Jxb = (A^t A)^{-1} A^t$ and then for the scaling of equation (\ref {eqn:scale-factors})
\begin {equation}
\label {eqn:conditionwrtb}
\renewcommand {\arraycolsep} {0.125em}
\begin {array} {r c l}
\| \Jxb \|& 
=&
\displaystyle \max_{\Db} {\| \Jxb \, \Db \|_\scaleA \over \| \Db \|_\scaleB} 
= \displaystyle \max_{\Db} {\displaystyle \left( {\| (A^t A)^{-1} A^t \Db \|_2 \over \scaleX} \right) \over \displaystyle \left( {\| \Db \|_2 \over \scaleB} \right)}
= {\scaleB \over \scaleX \, \sigmamina}\\
&
=&
\displaystyle {\| b \|_2 \over \| A x \|_2} {\| A x \|_2 \over \| x \|_2 \, \sigmamina} = \sec (\boldtheta) \, \vds \, .
\end {array}
\end {equation}

\subsection {Condition number of {\itshape x\/} with respect to {\itshape A\/}} 
\label {sec:begin}

The Jacobian matrix $\JxA$ is most easily calculated from the total differential of the identity $F = A^t (b - A x) = 0$ with respect to $A$ and $x$, which is $J_F [\Vector (A)] \, \Vector (dA) + J_F (x) \, dx = 0$. Hence 
\begin {equation}
\label {eqn:hence}
dx = \underbrace {- [J_F (x)]^{-1} J_F [\Vector (A)]}_{\displaystyle J_x[\Vector (A)]} \Vector (dA)
\end {equation}
where $J_F (x) = - A^t A$ and where
\begin {equation}
\label {eqn:where}
J_F [\Vector (A)] 
=
\left[ 
\setlength {\arraycolsep} {0.25em}
\begin {array} {c c c}
r^t\\
& \ddots\\
&& \hspace {0.5em} r^t
\end {array}
\right]
- 
\left[ 
\setlength {\arraycolsep} {0.33em}
\begin {array} {c c c c c}
x_1 A^t& \cdots& x_n A^t
\end {array}
\right] ,
\end {equation}
in which $r = b - A x$ is the least-squares residual, and $x_i$ is the $i$-th entry of $x$. 

\subsection {Transpose formula for condition numbers}
\label {sec:transpose}

The desired condition number is the norm induced from the norms in equation (\ref {eqn:norms}). 
\begin {equation}
\label {eqn:induced}
\setlength {\arraycolsep} {0.25em}
\begin {array} {r c l}
\| \JxA \|
& =
& \displaystyle \max_{\DA} {\| \JxA \, \Vector (\DA) \|_\scaleX \over \| \Vector (\DA) \|_\scaleA}
\\ \noalign {\bigskip}
& =
& \displaystyle {\scaleA \over \scaleX} \max_{\DA} {\| \JxA \, \Vector (\DA) \|_2 \over \| \DA \|_2} 
\end {array}
\end {equation}
The numerator and denominator are vector and matrix $2$-norms, respectively. If $A$ is an $m \times n$ matrix, then the maximization in equation (\ref {eqn:induced}) has many degrees of freedom. An identity for operator norms can be applied to avoid this large optimization problem. 

Suppose $\reals^M$ and $\reals^N$ have the norms $\| \cdot \|_M$ and $\| \cdot \|_N$, respectively. If a problem with data $d \in \reals^N$ has a solution function $s = f(d) \in \reals^M$, then the condition number is the induced norm of the $M \times N$ Jacobian matrix,
\vspace {-1ex}
\begin {equation}
\label {eqn:condition}
\| J_f (d) \| = \max_{\Dd} {\| J_f(d) \, \Dd \|_M \over \| \Dd \|_N} \, .
\end {equation}
This optimization problem has $N$ degrees of freedom. An alternate expression is the norm for the transposed operator represented by the transposed matrix,\footnote {Equation (\ref {eqn:transpose}) is stated by 
\citet [chp.\ IV, p.\ 7, eqn.\ 4] {BourbakiTVS}, 
\citet [p.\ 478, lem.\ 2] {DunfordSchwartz1958},
\citet [p.\ 93, thm.\ 4.10, eqn.\ 2] {Rudin1973},
and 
\citet [p.\ 195, thm.\ 2$^\prime$, eqn.\ 3] {Yosida1974}.
The name of the transposed operator varies.  See \citet [chp.\ IV, p.\ 6, top] {BourbakiTVS} for ``transpose,''  Dunford and Schwartz (loc.\ cit.)\ and Rudin (loc.\ cit.)\ for ``adjoint,'' and \citet [p.\ 194, def.\ 1] {Yosida1974} for ``conjugate'' or ``dual.'' Some parts of mathematics use ``adjoint'' in the restricted context of Hilbert spaces, for example in linear algebra see \citet [pp.\ 168--174, sec.\ 5.1] {Lancaster1985}. That concept is actually a ``slightly different notion'' \citep [p.\ 479] {DunfordSchwartz1958} from the Banach space transpose used here.
}
\begin {equation}
\label {eqn:transpose}
\| J_f (d) \| = \| [J_f (d)]^t \|^* = \max_{\Ds} {\| [J_f (d)]^t \Ds \|_N^* \over \| \Ds \|_M^*} \, .
\end {equation}
The norm is induced from the dual norms $\| \cdot \|_M^*$ and $\| \cdot \|_N^*$ which must be determined. This optimization problem has $M$ degrees of freedom. Equation (\ref {eqn:transpose}) might be easier to evaluate, especially when the problem has many more data values than solution variables, $N \gg M$, as is often the case. 

Applying the formula (\ref {eqn:transpose}) for the norm of the transpose matrix to the equation (\ref {eqn:induced}) results in the simpler optimization problem,
\begin {equation}
\label {eqn:simpler}
\| \JxA \| =
{\scaleA \over \scaleX} \max_{\Dx} {\| \{ \JxA \}^t \Dx \|^*_2 \over \| \Dx \|^*_2} 
\end {equation}
The norm for the transposed Jacobian matrix is induced from the duals of the $2$-norms for matrices and vectors. The vector $2$-norm in the denominator is its own dual. So as not to interrupt the present discussion, some facts needed to evaluate the numerator are proved in section \ref {sec:advanced}: the dual of the matrix $2$-norm is the nuclear norm (section \ref {app:spectral}), and a formula for the nuclear norm is given (section \ref {app:rank2}).

\subsection {Condition number of {\itshape x\/} with respect to {\itshape A\/}, continued} 
\label {sec:conditionwrtAcontinued}

The application of equation (\ref {eqn:simpler}) requires evaluating the matrix-vector product in the numerator. Continuing the derivation of section \ref {sec:begin} from equation (\ref {eqn:where}), the vector-matrix product $v^t J_F [\Vector (A)]$ for some $v$ can be evaluated by straightforward multiplication, 
\begin {displaymath}
v^t J_F [\Vector (A)] 
=
\left[ 
\setlength {\arraycolsep} {0.25em}
\begin {array} {c c c}
v_1 r^t 
& \cdots
& v_n r^t 
\end {array}
\right]
- 
\left[ 
\setlength {\arraycolsep} {0.33em}
\begin {array} {c c c c c}
x_1 v^t A^t & \cdots& x_n v^t A^t 
\end {array}
\right] .
\end {displaymath}
This row vector, when transposed, is expressed more simply using $\Vector$ notation: the first part is $r$ scaled by each entry of $v$, $\Vector (r v^t)$, the second part is $A v$ scaled by each entry of $x$, $\Vector (A v x^t) $. Altogether
\begin {displaymath}
J_F [\Vector (A)]^t v
=
\Vector (rv^t - Avx^t) \, .
\end {displaymath}
Substituting $v = \{- [J_F (x)]^{-1}\}^t \Dx = (A^t A)^{-1} \Dx$ for some $\Dx$ gives, by equation (\ref {eqn:hence}),
\begin {displaymath}
\JxA^t \Dx 
= 
\Vector \left\{ r \, [(A^t A)^{-1} \Dx]^t - A \, [(A^t A)^{-1} \Dx] \, x^t \right\} \, ,
\end {displaymath}
or equivalently,
\begin {equation}
\label {eqn:JxADx}
\Matrix \{ \JxA^t \Dx \} 
=
u_1^{} v_1^t + u_2^{} v_2^t 
\end {equation}
where ``$\Matrix$'' is the inverse of ``$\Vector$,'' and
\begin {equation}
\label {eqn:vectors}
\setlength {\arraycolsep} {0.125em}
\begin {array} {r c l r c l}
u_1& =& r = b - Ax,& 
v_1& =& (A^t A)^{-1} \Dx,\\ \noalign {\smallskip}
u_2& =& - A (A^t A)^{-1} \Dx, \qquad&
v_2& =& x .
\end {array}
\end {equation}
The matrix on the right side of equation (\ref {eqn:JxADx}) has rank $2$. Moreover, the two rank $1$ pieces are mutually orthogonal because the least-squares residual $r$ is orthogonal to the coefficient matrix $A$. With these replacements equation (\ref {eqn:simpler}) becomes
\begin {equation}
\label {eqn:inducedtransposed}
\big \| \JxA \big\|
=
{\scaleA \over \scaleX} \max_{\| \Dx \|_2 = 1} \| u_1^{} v_1^t + u_2^{} v_2^t \|^*_2 \, .
\end {equation}

Lemma \ref {lem:kahan} shows that the dual of the spectral matrix norm is the matrix norm that sums the singular values of the matrix, which is called the nuclear  norm. Lemma \ref {lem:rank2} then evaluates this norm for rank $2$ matrices to find that the objective function of equation (\ref {eqn:inducedtransposed}) is
\begin {equation}
\label {eqn:numerator}
\sqrt {\strut \| u_1 \|_2^2 \, \| v_1 \|_2^2 + \| u_2 \|_2^2 \, \| v_2 \|_2^2 + 2 \, \| u_1 \|_2 \, \| v_1 \|_2 \, \| u_2 \|_2 \, \| v_2 \|_2 \, \cos (\theta_u - \theta_v)} \, ,
\end {equation}
where $\theta_u$ is the angle between $u_1$ and $u_2$, and $\theta_v$ is the angle between $v_1$ and $v_2$, and both angles should be taken from $0$ to $\pi$. Since $u_1$ is orthogonal to $u_2$ therefore $\theta_u = \pi / 2$ and then $| \theta_u - \theta_v | \le \pi / 2$ so $\cos (\theta_u - \theta_v)$ is not negative. This means the maximum lies between the lower and upper limits
\begin {equation}
\label {eqn:lowerandupper}
\sqrt {\strut \| u_1 \|_2^2 \, \| v_1 \|_2^2 + \| u_2 \|_2^2 \, \| v_2 \|_2^2} \quad \mbox {and} \quad \| u_1 \|_2 \, \| v_1 \|_2 + \| u_2 \|_2 \, \| v_2 \|_2 \, .
\end {equation}
With $\| \Dx \|_2$ restricted to $1$, the lower bound and also the upper bound attain their maxima when $\Dx$ is a right singular unit vector for the smallest singular value of $A$,
\begin {equation}
\label {eqn:maxima}
\| u_1 \|_2 \, \| v_1 \|_2 = {\| r \|_2 \over (\sigmamina)^2} \quad \mbox {and} \quad \| u_2 \|_2 \, \| v_2 \|_2 = {\| x \|_2 \over \sigmamina} \, .
\end {equation}
Some value of $\| u_1^{} v_1^t + u_2^{} v_2^t \|^*_2$ lies between the limits when $\Dx$ is a right singular unit vector for the smallest singular value of $A$. Because these are the largest possible limits, the maximum value must lie between them as well. These limits must be multiplied by the coefficient $\scaleA / \scaleX$ in equation (\ref {eqn:inducedtransposed}) to obtain bounds for the norm of the Jacobian matrix. 

\subsection {Summary of condition numbers} 
\label {sec:conditionwrtAfinished}

\ \par

\begin {theorem} 
[\scshape Spectral condition numbers]
\label {thm:condition-numbers}
For the full column rank linear least-squares problem with solution $x = (A^tA)^{-1} A^t b$, and for the scaled norms of equation (\ref {eqn:norms}) with scale factors $\scaleA$, $\scaleB$, and $\scaleX$,
\begin {displaymath}
\chi_x (b) =  {\scaleB \over \scaleX \sigmamin}
\qquad
\chi_x (A) = {\scaleA \over \scaleX} \max_{\| \Delta x \|_2 = 1} \sigma_1 + \sigma_2 \, ,
\end {displaymath}
where $\sigma_1$ and $\sigma_2$ are the singular values of the rank $2$ matrix $u_1^{} v_1^t + u_2^{} v_2^t$ \,for 
\begin {displaymath}
\setlength {\arraycolsep} {0.125em}
\begin {array} {r c l r c l}
u_1& =& r = b - Ax,& 
v_1& =& (A^t A)^{-1} \Dx,\\ \noalign {\smallskip}
u_2& =& - A (A^t A)^{-1} \Dx, \qquad&
v_2& =& x .
\end {array}
\end {displaymath}
The value of $\chi_x (A)$ lies between the lower limit of Malyshev and the upper limit of Bj\"orck,
\begin {displaymath}
{\scaleA \over \scaleX \sigmamin} \sqrt { \left( {\| r \|_2 \over \sigmamin} \right)^2 + \| x \|_2^2}
\, \le \,
\chi_x (A)
\, \le \,
{\scaleA \over \scaleX \sigmamin} \left( {\| r \|_2 \over \sigmamin} + \| x \|_2 \right) \, .
\end {displaymath}
The upper bound exceeds $\chi_x (A)$ by at most a factor $\sqrt 2$. The formula for $\chi_x(b)$ and the limits for $\chi_x(A)$ simplify to equations (\ref {eqn:chixA}, \ref {eqn:chixb}) for the scale factors in equation (\ref {eqn:scale-factors}). 
\end {theorem}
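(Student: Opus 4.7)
The proof is essentially an assembly of the pieces developed in Sections~\ref{sec:conditionwrtb} through~\ref{sec:conditionwrtAcontinued}, so I would keep it short and cross-reference heavily. The formula $\chi_x(b) = \scaleB/(\scaleX\,\sigmamin)$ is exactly the calculation already carried out in equation~(\ref{eqn:conditionwrtb}), so I would simply quote that line. For $\chi_x(A)$ I would start from equation~(\ref{eqn:inducedtransposed}), which writes the condition number as $(\scaleA/\scaleX)$ times the maximum over $\|\Dx\|_2 = 1$ of the dual spectral norm of the rank-$2$ matrix $u_1 v_1^t + u_2 v_2^t$ with the vectors given in~(\ref{eqn:vectors}). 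Lemma~\ref{lem:kahan} identifies the dual of the spectral norm with the nuclear norm, and for any matrix of rank at most $2$ the nuclear norm equals $\sigma_1 + \sigma_2$; this yields the stated closed form for $\chi_x(A)$.

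For the two-sided bounds I would invoke the explicit nuclear-norm expression~(\ref{eqn:numerator}) from Lemma~\ref{lem:rank2}. The orthogonality $r \perp \col(A)$ forces $u_1 \perp u_2$, hence $\theta_u = \pi/2$ and the cross term $\cos(\theta_u - \theta_v)$ lies in $[0,1]$. This pointwise sandwiches the nuclear norm between the two quantities displayed in~(\ref{eqn:lowerandupper}). A brief SVD computation using $A = U\Sigma V^t$ shows that both $\|(A^tA)^{-1}\Dx\|_2$ and $\|A(A^tA)^{-1}\Dx\|_2$ are simultaneously maximized over $\|\Dx\|_2 = 1$ by the right singular unit vector of $A$ associated with $\sigmamin$, producing the values $\|u_1\|_2\|v_1\|_2 = \|r\|_2/\sigmamin^2$ and $\|u_2\|_2\|v_2\|_2 = \|x\|_2/\sigmamin$ listed in~(\ref{eqn:maxima}). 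Consequently the maxima of the lower and upper envelope are attained at the same $\Dx$, the maximum of the nuclear norm is pinned between them, and multiplying by $\scaleA/\scaleX$ delivers the Malyshev lower and Bj\"orck upper bounds in the theorem.

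The $\sqrt{2}$ ratio is then just the elementary inequality $a + b \le \sqrt{2}\sqrt{a^2 + b^2}$ applied to $a = \|r\|_2/\sigmamin$ and $b = \|x\|_2$. Substituting the scale factors of~(\ref{eqn:scale-factors}) and the definitions of $\boldkappa$, $\vds$, and $\boldtheta$ from~(\ref{eqn:three}) algebraically reduces the bounds to the boxed forms~(\ref{eqn:chixA}) and~(\ref{eqn:chixb}). The one step that deserves a line of commentary rather than a calculation is the passage from the pointwise sandwich $L(\Dx) \le N(\Dx) \le U(\Dx)$ to the sandwich of maxima: because $U$ is maximized at the smallest-singular-vector direction, $\max N \le \max U$ is trivial, while $\max N \ge L(\Dx^*) = \max L$ holds because $L$ is maximized at the same $\Dx^*$. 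There is no genuine obstacle; the work has already been done in earlier subsections and the theorem is mainly a consolidated restatement.
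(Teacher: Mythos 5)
Your proposal is correct and follows essentially the same route as the paper, whose proof of the theorem is precisely a consolidation of equations (\ref{eqn:conditionwrtb}), (\ref{eqn:inducedtransposed})--(\ref{eqn:maxima}) and Lemmas \ref{lem:kahan} and \ref{lem:rank2}. Your explicit remark on why the pointwise sandwich transfers to the maxima (both envelopes being maximized at the same singular direction) is a slightly more careful articulation of the paper's terser wording, but it is the same argument.
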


\begin {proof} 
Section \ref {sec:conditionwrtb} derives $\chi_x(b)$, and sections \ref {sec:begin}--\ref {sec:conditionwrtAfinished} derive $\chi_x(A)$ and the bounds. 
\end {proof}

\section {Discussion}
\label {sec:comparison}

\subsection {Example of strict limits}

The condition number $\chi_x (A)$ in theorem \ref {thm:condition-numbers} can lie strictly between the limits of Bj\"orck and Malyshev. For the example of section \ref {sec:example}, the rank $2$ matrix in the theorem is
\begin {displaymath}
\renewcommand {\arraystretch} {1.25}
u_1^{} v_1^t + u_2^{} v_2^t = \left[ \begin {array} {c c} 
- \beta \cos (\phi) \Delta x_1& - \beta \sin (\phi) \Delta x_1 / \alpha\\ 
- \beta \cos (\phi) \Delta x_2 / \alpha& - \beta \sin (\phi) \Delta x_2 / \alpha^2\\ 
\Delta x_1& \Delta x_2 / \alpha^2 \\ \end {array} \right] .
\end {displaymath}
For the specific values $\alpha = 1/10$, $\beta = 1$, $\phi = \pi / 10$, the sum of the singular values of this matrix can be numerically maximized over $\| \Delta x \|_2 = 1$ to evaluate the condition number with the following results.
\begin {displaymath}
\renewcommand {\arraystretch} {1.25}
\renewcommand {\tabcolsep} {0.25em}
\begin {tabular} {r c l l}
$\displaystyle \boldkappa (\vds \tan (\boldtheta) + 1)$& $=$& 40.928\ldots& upper limit of Bj\"orck\\
$\displaystyle \chi_x (A)$& $=$& 35.193\ldots& condition number\\
$\displaystyle \boldkappa \, \big( \strut [\vds  \tan (\boldtheta)]^2 + 1\big)^{1/2}$& $=$& 32.505\ldots& lower limit of Malyshev\\
\end {tabular}
\end {displaymath}
These calculations were done with Mathematica \citep {Wolfram2003}. 

\subsection {Exact formulas for some condition numbers}
\label {sec:exact}

\begin {table} 
\caption {\it Cases for which condition numbers have been determined for the full column rank least-squares problem, $\min_x \| b - A x \|_2$. All the formulas are for $\chi_x(A)$ except Grattan's formula is for $\chi_x(A,b)$. Notation: $r$ is the residual, $\sigmamin$ is the smallest singular value of $A$. In column 5, ``approx'' means the value in column 4 approximates the condition number, ``exact'' means it is the condition number for the chosen norms.}
\label {tab:LLS}
\newcommand {\textstrut} {\vrule depth1.125ex height2.375ex width\strutwidth}
\newcommand {\leftbox} [3] {\begin {minipage}{#1} \vrule depth0ex height4.0ex width\strutwidth #2\\ \scriptsize #3\vrule depth3.0ex height0ex width\strutwidth \end {minipage}}
\setlength {\arraycolsep} {0.6em}
\vspace {-4ex}
\begin {displaymath}
\small
\begin {array} {| l | c | c | c | c |}
\cline {2-3}
\multicolumn {1} { c |} {\textstrut}
& \multicolumn {2} { c |} {\mbox {norms}}
\\
\hline 
\multicolumn {1} {| c |} {\textstrut \mbox {source}}
& \mbox {data}
& \mbox {solution}
& \mbox {formula}
& \mbox {status}
\\ \hline \hline
\leftbox {10em} 
{Bj\"orck and theorem \ref {thm:condition-numbers}} 
{\nocite {Bjorck1996}(1996, p.\ 31, eqn.\ 1.4.28)}
& \displaystyle {\| \DA \|_2 \over \| A \|_2}
& \displaystyle {\| \Dx \|_2 \over \| x \|_2}
& \displaystyle {\| A \|_2 \over \strut \sigmamin} \left( {\| r \|_2 \over \strut \sigmamin \, \| x \|_2} \, + 1 \right)
& \mbox {approx}
\\
\leftbox {10em} 
{Geurts} 
{\nocite {Geurts1982}(1982,  p.\ 93, eqn.\ 4.3)}
& \displaystyle {\| \DA \|\sub{F} \over \| A \|\sub{F}}
& \displaystyle {\| \Dx \|_2 \over \| x \|_2}
& \displaystyle {\| A \|\sub{F} \over \sigmamin^{}} \sqrt{ {\| r \|_2^2 \over \sigmamin^2 \, \| x \|_2^2} + 1}
& \mbox {exact}
\\
\multicolumn {1} {| l} {\leftbox {10em} {Gratton} {\nocite {Gratton1996}(1996, p.\ 525, eqn.\ 2.1)}}
& \hspace {-4em} \displaystyle \left\| \strut [ \alpha \, \DA, \beta \, \Db \, ] \right \|\sub{F}
& \| \Dx \|_2
& \displaystyle {1 \over \sigmamin^{}} \sqrt{ {\| r \|_2^2 \over \alpha^2 \sigmamin^2} + {\| x \|_2^2 \over \alpha^2} + {1 \over \beta^2}}
& \mbox {exact}
\\
\leftbox {10em} 
{Malyshev} 
{\nocite {Malyshev2003}(2003, p.\ 1187, eqn.\ 1.8)}
& \displaystyle {\| \DA \|\sub{F} \over \| A \|_2}
& \displaystyle {\| \Dx \|_2 \over \| x \|_2}
&& \mbox {exact}
\\
\leftbox {11.5em} 
{Malyshev and theorem \ref {thm:condition-numbers}} 
{\nocite {Malyshev2003}(2003, p.\ 1189, eqn.\ 2.4 and line --6)}
& \displaystyle {\| \DA \|_2 \over \| A \|_2}
& \displaystyle {\| \Dx \|_2 \over \| x \|_2}
& \mbox{\hspace {-1.5em} \raisebox{4.0ex}[0pt][0pt]{$\displaystyle \left.
\vrule depth7ex height0ex width 0pt \right\} \hspace {0.5em} {\| A \|_2 \over \sigmamin^{}} \sqrt{ {\| r \|_2^2 \over \sigmamin^2 \, \| x \|_2^2} +
1}$}}
& \mbox {approx}
\\ \hline
\end {array}
\end {displaymath}
\vspace*{-2ex}
\end {table}

Table \ref {tab:LLS} lists several condition numbers or approximations to condition numbers for least-squares solutions. The three exact values measure changes to $A$ by the Frobenius norm, while the two approximate values are for the spectral norm. The difference can be attributed to the ease or difficulty of solving the maximization problem in equation (\ref {eqn:inducedtransposed}). The dual spectral norm of the rank $2$ matrix involves a trigonometric function, $\cos (\pi / 2 - \theta_v)$ in equation (\ref {eqn:numerator}), whose value only can be estimated. If a Frobenius norm were used instead, then lemma \ref {lem:rank2} shows the dual norm of the rank $2$ matrix involves an expression, $\cos (\pi / 2) \cos (\theta_v)$, whose value is zero, which greatly simplifies the maximization problem.

\subsection {Overestimates of condition numbers}
\label {sec:overestimate}

Many error bounds in the literature combine $\chi_x(A) + \chi_x(b)$ in the manner of equation (\ref {eqn:double}),
\begin {eqnarray}
\nonumber 
{\| \Dx \|_2 \over \| x \|_2}
& \le
& \chi_x (A,b) \, \epsilon + o (\epsilon) \qquad \mbox {attainable}
\\ 
\label {eqn:bestbound-1}
& \le
& \big[ \chi_x(A) + \chi_x(b) \big] \epsilon + o (\epsilon) \qquad \mbox {overestimate by at most $\times 2$}
\\ \noalign {\smallskip}
\label {eqn:bestbound-2}
& \le
& \big[ \boldkappa (\vds \tan (\boldtheta) + 1) + \vds \sec (\boldtheta) \big] \epsilon + o (\epsilon) \quad \mbox {further at most $\times \sqrt 2$}
\\ \noalign {\smallskip} 
\label {eqn:bestbound-3}
& = 
& \left( {\| A \|_2 \| r \|_2 \over \sigmamin^2 \| x \|_2} + {\| A \|_2 \over \sigmamin} + {\| b \|_2 \over \sigmamin \| x \|_2} \right) \epsilon +o (\epsilon) \, ,
\end {eqnarray}
where $\epsilon = \max \{ \| \DA \|_2 / \| A \|_2, \| \Db \|_2 / \| b \|_2 \}$. Bounds (\ref {eqn:bestbound-1}, \ref {eqn:bestbound-2}) are larger than the attainable bound by at most factors $2$ and $2 \sqrt 2$, respectively, by equation (\ref {eqn:sum-6}) and theorem \ref {thm:condition-numbers}.

Some bounds are yet larger. \citet [p.\ 382, eqn.\ 20.1] {Higham2002} reports
\begin {eqnarray*}
{\| \Dx \|_2 \over \| x \|_2}
& \le
& \boldkappa \epsilon \left( 2 + ( \boldkappa + 1) {\| r \|_2 \over \| A \|_2 \| x \|_2} \right) + {\mathcal O} (\epsilon^2)
\\ \noalign {\medskip}
& = 
& \left( {\| A \|_2 \| r \|_2 \over \sigmamin^2 \| x \|_2} + {\| A \|_2 \over \sigmamin} + {\| A \|_2 \| x \|_2 + \| r \|_2 \over \sigmamin \| x \|_2} \right) \epsilon + {\mathcal O} (\epsilon^2) \, .
\end {eqnarray*}
This bound is an overestimate in comparison to equation (\ref {eqn:bestbound-3}).

An egregious overestimate occurs in an error bound that appears to have originated in the 1983 edition of the popular textbook of \citet [p.\ 242, eqn.\ 5.3.8] {Golub1996}. The overestimate is restated by \citet [p.\ 50] {Anderson1992} in the LAPACK documentation, and by \citet [p.\ 117] {Demmel1997},
\begin {equation}
\label {eqn:GVL}
{\| \Dx \|_2 \over \| x \|_2} 
\le 
\left[ 2 \sec (\boldtheta) \, \boldkappa + \tan (\boldtheta) \, \boldkappasquared \right] \epsilon + {\mathcal O} (\epsilon^2) \, .
\end {equation}
In comparison with equation (\ref {eqn:bestbound-2}) this bound replaces $\vds$ by $\boldkappa$ and replaces $1$ by $\sec (\boldtheta)$. An overestimate by a factor of $\boldkappa$ occurs for the example of section \ref {sec:example} with $\alpha \ll 1$, $\beta = 1$, and $\phi = {\pi \over 2}$. In this case the ratio of equation (\ref {eqn:GVL}) to equation (\ref {eqn:bestbound-2}) is
\begin {displaymath}
{2 \sec (\boldtheta) \, \boldkappa + \tan (\boldtheta) \, \boldkappasquared \over \boldkappa (\vds \tan (\boldtheta) + 1) + \vds \sec (\boldtheta)} = {\displaystyle {1 + 2 \alpha \sqrt {1 + \beta^2} \over \alpha^2 \beta} \over \displaystyle {1 + \beta + \alpha \sqrt {1 + \beta^2} \over \alpha \beta}} \approx {1 \over 2 \, \alpha} = {\boldkappa \over 2} \, .
\end {displaymath}

\section {Norms of operators on normed linear spaces of finite dimension}
\label {sec:advanced}

\subsection {Introduction}

This section describes the dual norms in the formulas of sections \ref {sec:transpose} and \ref {sec:conditionwrtAcontinued}. The actual mathematical concept is a norm for the dual space. However, linear algebra ``identifies'' a space with its dual, so the concept becomes a ``dual norm'' for the same space. This point of view is appropriate for Hilbert spaces, but it omits an important level of abstraction. As a result, the linear algebra literature lacks a complete development of finite dimensional normed linear (Banach) spaces. Rather than make functional analysis a prerequisite for this paper, here the identification approach is generalized to give dual norms for spaces other than column vectors (which is needed for data in matrix form), but only as far as the dual norm itself in section \ref {app:duals}. Section \ref {app:spectral} gives the formula for the dual of the spectral matrix norm. Section \ref {app:rank2} evaluates the norm for matrices of rank $2$. 

{\it Banach spaces are needed in this paper because the norms used in numerical analysis are not necessarily those of a Hilbert space.} The space of $m \times n$ matrices viewed as column vectors has been given the spectral matrix norm in equation (\ref {eqn:norms}). If the norm were to make the space a Hilbert space, then the norm would be given by an inner product. There would be an $mn \times mn$ symmetric matrix, $S$, so that for every $m \times n$ matrix $B$,
\begin {displaymath}
\| B \|_2
= 
\sqrt {\strut [\Vector (B)]^t \; S \; \Vector (B)} \, ,
\end {displaymath}
which is impossible. 

\subsection {Duals of normed spaces}
\label {app:duals} 

If $\X$ is a finite dimensional vector space over $\reals$, then the dual space $\X^*$ consists of all linear transformations $f : \X \rightarrow \reals$, called functionals. If $\X$ has a norm, then $\X^*$ has the usual operator norm given by
\begin {equation}
\label {eqn:operatorNorm}
\| f \| = \sup_{\| x \| = 1} f (x) \, .
\end {equation}
One notation is used for both norms because whether a norm is for $\X$ or $\X^*$ can be decided by what is inside. 

For a finite dimensional $\X$ with a basis $e_1$, $e_2$, $\ldots\,$, $e_n$, the dual space has a basis $g_1$, $g_2$, $\ldots\,$, $f_n$ defined by $f_i (e_j) = \delta_{i,j}$ where $\delta_{i,j}$ is Kronecker's delta function. In linear algebra for finite dimensional spaces, it is customary to represent the arithmetic of $\X^*$ in terms of $\X$ under the transformation $T : \X \rightarrow \X^*$ defined on the bases by $T(e_i) = f_i$. \textit {This transformation is not unique because it depends on the choices of bases.\/} Usually $\X$ has a favored or ``canonical'' basis whose $T$ is said to ``identify'' $\X^*$ with $\X$. Under this identification the norm for the dual space then is regarded as a norm for the original space.

\begin {definition}
[\scshape Dual norm]
Let $\X = \reals^m$ have norm $\| \cdot \|$ and let $T$ identify $\X$ with the dual space $\X^*$. The dual norm for $\X$ is 
\begin {displaymath}
\| v \|^* = \| T(v) \| 
\end {displaymath}
where the right side is the norm in equation (\ref {eqn:operatorNorm}) for the dual space. 
\end {definition}

The notation $\| \cdot \|^*$ avoids confusing the two norms for $\X$. There seems to be no standard notation for the dual norm; others are $\| \cdot \|_D$, $\| \cdot \|^D$, and $\| \cdot \|_{\rm d}$ which are used respectively by \citet [p.\ 107, eqn.\ 6.2] {Higham2002}, \citet [p.\ 275, def.\ 5.4.12] {Horn1985}, and \citet [p.\ 381, eqn.\ 1] {Lancaster1985}.

\subsection {Dual of the spectral matrix norm}
\label {app:spectral}

The space $\reals^{m \times n}$ of real $m \times n$ matrices has a canonical basis consisting of the matrices $E^{(i,j)}$ whose entries are zero except the $i,j$ entry which is $1$. This basis identifies a matrix $A$ with the functional whose value at a matrix $B$ is $\sum_{i,j} A_{i,j} B_{i,j} = \tr (A^t B)$.

\begin {lemma} 
[\scshape Dual of the spectral matrix norm]
\label {lem:kahan}
The dual norm of the spectral matrix norm with respect to the aforementioned canonical basis for $\reals^{m \times n}$ is given by $\| A \|_2^* = \| \sigma(A) \|_1$, where $\sigma (A) \in \reals^{\min \{m, n\}}$ is the vector of $A$'s singular values including multiplicities. That is, $\| A \|_2^*$ is the sum of the singular values of $A$  with multiplicities, which is called the nuclear norm or the trace norm.
\end{lemma}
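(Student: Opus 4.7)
The plan is to evaluate
\[
\|A\|_2^* \;=\; \sup_{\|B\|_2 = 1} \tr(A^t B),
\]
which is the value produced by the canonical identification in the preceding definition, by diagonalizing $A$ through its singular value decomposition. Writing $A = U \Sigma V^t$ with $U \in \reals^{m\times m}$ and $V \in \reals^{n\times n}$ orthogonal and $\Sigma \in \reals^{m\times n}$ carrying the singular values $\sigma_1,\ldots,\sigma_k$ (with $k = \min\{m,n\}$) on its main diagonal, the cyclic invariance of the trace yields
\[
\tr(A^t B) \;=\; \tr(\Sigma^t U^t B V) \;=\; \tr(\Sigma^t C) \;=\; \sum_{i=1}^k \sigma_i \, C_{ii},
\]
where $C = U^t B V$. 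Because $U$ and $V$ are orthogonal, the map $B \mapsto C$ is a spectral-norm isometry, so the constraint $\|B\|_2 = 1$ is equivalent to $\|C\|_2 = 1$, and the supremum is unchanged if taken over $C$ in place of $B$.

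I would then establish the upper bound $\|A\|_2^* \le \|\sigma(A)\|_1$ using the elementary fact that every diagonal entry of a matrix is bounded by its operator norm, namely $|C_{ii}| = |e_i^t C e_i| \le \|C e_i\|_2 \le \|C\|_2 = 1$. Combined with $\sigma_i \ge 0$, this gives
\[
\tr(\Sigma^t C) \;\le\; \sum_{i=1}^k \sigma_i |C_{ii}| \;\le\; \sum_{i=1}^k \sigma_i \;=\; \|\sigma(A)\|_1.
\]
For the reverse inequality I would exhibit an explicit optimizer: let $C_\star \in \reals^{m \times n}$ be the rectangular identity with $(C_\star)_{ii} = 1$ for $1 \le i \le k$ and all other entries zero. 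Then $\|C_\star\|_2 = 1$, and the matrix $B_\star = U C_\star V^t$ satisfies $\|B_\star\|_2 = 1$ by orthogonal invariance, with $\tr(A^t B_\star) = \tr(\Sigma^t C_\star) = \sum_{i=1}^k \sigma_i$. This matches the upper bound, proving $\|A\|_2^* = \|\sigma(A)\|_1$.

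The argument is essentially routine once the SVD substitution reduces matters to a sum over diagonal entries, and the only point requiring explicit attention is the bound $|C_{ii}| \le \|C\|_2$, which I would justify in a single line. No genuine obstacle appears at this step; the harder work lies ahead in evaluating this dual norm on the specific rank 2 matrix $u_1 v_1^t + u_2 v_2^t$ that arises in the condition number derivation.
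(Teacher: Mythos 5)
Your proof is correct and follows essentially the same route as the paper's (the one attributed to Kahan): reduce to $\tr(\Sigma^t U^t B V)$ via the SVD and trace invariance, bound the diagonal entries of $U^t B V$ by its spectral norm, and attain equality with the rectangular identity. No gaps; your explicit justification of $|C_{ii}|\le\|C\|_2$ is a minor elaboration of a step the paper states without proof.
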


\begin {proof} 
{\it (Supplied by \citet {Kahan2003}.)} Let $A = U \Sigma V^t$ be a ``full'' singular value decomposition of $A$, where both $U$ and $V$ are orthogonal matrices, and where $\Sigma$ is an $m \times n$ ``diagonal'' matrix whose diagonal entries are those of $\sigma (A)$. The trace of a square matrix, $M$, is invariant under conjugation, $V^{-1} M V$, so
\begin {displaymath}
\| A \|_2^*
=
\sup_{\| B \|_2 = 1} \tr (A^t B)
=
\sup_{\| B \|_2 = 1} \tr (V \Sigma^t \, U^t B)
=
\sup_{\| B \|_2 = 1} \tr (\Sigma^t \, U^t B \, V) \, .
\end {displaymath}
Since $\| U^t B V \|_2 = \| B \|_2 = 1$, the entries of $U^t B V$ are at most $1$ in magnitude, and therefore $| \tr (\Sigma^t \, U^t B \, V) | \le \tr (\Sigma^t)$. This upper bound is attained for $B = U D V^t$ where $D$ is the $m \times n$ ``identity'' matrix. \end {proof}

An alternate proof is offered by the work of \nocite {Taub1963} von \citet {vonNeumann1937siam}. He studied a special class of norms for $\reals^{m \times n}$. A symmetric gauge function of order $p$ is a norm for $\reals^p$ that is unchanged by every permutation and sign change of the entries of the vectors. Such a function applied to the singular values of a matrix always defines a norm on $\reals^{m \times n}$. For example, $\| A \|_2 = \| \sigma (A) \|_\infty$ where as in lemma \ref {lem:kahan} $\sigma (A)$ is the length $\min \{ m, n \}$ column vector of singular values for $A$. The dual of this norm is given by the dual norm for the singular values vector, see \citet [p.\ 78, lem.\ 3.5] {Stewart1990}. 

\begin {proof} 
{\it (In the manner of John von Neumann.)} By the aforementioned lemma to von Neumann's gauge theorem, $\| A \|_2^* = \| \sigma (A) \|_\infty^* = \| \sigma (A) \|_1$.
\end {proof}

\subsection {Rank 2 Matrices}
\label {app:rank2}

This section finds singular values of rank $2$ matrices to establish some norms of the matrices that simplify the condition numbers in equation (\ref {eqn:inducedtransposed}).

\begin {lemma}
[\scshape Frobenius and nuclear norms of rank 2 matrices.]
\label {lem:rank2}
If $u_1, u_2 \in \reals^m$ and $v_1, v_2 \in \reals^n$, then (Frobenius norm)
\begin {displaymath}
\begin {array} {l}
\left\| u_1^{} v_1^t + u_2^{} v_2^t \right\|\sub{F}^* =
\left\| u_1^{} v_1^t + u_2^{} v_2^t \right\|\sub{F} =
\\
\noalign {\medskip} 
\hspace {2em} \sqrt {\strut \| u_1 \|_2^2 \, \| v_1 \|_2^2 + \| u_2 \|_2^2
\, \| v_2 \|_2^2 + 2 \, \| u_1 \|_2 \, \| v_1 \|_2 \, \| u_2 \|_2 \, \| v_2
\|_2 \, \cos (\theta_u) \cos (\theta_v)} \, ,
\end {array}
\end {displaymath}
and (nuclear norm, or trace norm)
\begin {displaymath}
\begin {array} {l}
\left\| u_1^{} v_1^t + u_2^{} v_2^t \right\|_2^* =
\\
\noalign {\medskip} 
\hspace {2em} \sqrt {\strut \| u_1 \|_2^2 \, \| v_1 \|_2^2 + \| u_2 \|_2^2
\, \| v_2 \|_2^2 + 2 \, \| u_1 \|_2 \, \| v_1 \|_2 \, \| u_2 \|_2 \, \| v_2
\|_2 \, \cos (\theta_u - \theta_v)} \, ,
\end {array}
\end {displaymath}
where $\theta_u$ is the angle between $u_1$ and $u_2$, and $\theta_v$ is
the angle between $v_1$ and $v_2$. Both angles should be taken from $0$ to $\pi$.
\end{lemma}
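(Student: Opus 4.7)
The plan is to treat the Frobenius and nuclear norm formulas separately; the Frobenius computation also feeds into the nuclear one. First I observe that $\|\cdot\|\sub{F}$ is the Euclidean norm on $\reals^{m\times n}$ under the $\Vector$ identification, and as such is self-dual, giving $\|M\|\sub{F}^* = \|M\|\sub{F}$ immediately. The formula itself I would derive by direct expansion of $\|u_1 v_1^t + u_2 v_2^t\|\sub{F}^2 = \tr(M^t M)$, which breaks into four cyclic-trace terms of the form $(u_i^t u_j)(v_i^t v_j)$, and then substituting $u_1^t u_2 = \|u_1\|_2 \|u_2\|_2 \cos \theta_u$ and analogously for $v$.

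For the nuclear norm, I would factor $M = U V^t$ with $U = [\,u_1 \; u_2\,] \in \reals^{m\times 2}$ and $V = [\,v_1 \; v_2\,] \in \reals^{n\times 2}$. The two squared nonzero singular values of $M$ equal the eigenvalues of the $2\times 2$ matrix $U^t U V^t V$, since $M^t M = V U^t U V^t$ shares its nonzero spectrum with $U^t U V^t V$. Calling those eigenvalues $\sigma_1^2$ and $\sigma_2^2$, the rank-$2$ identity
\begin{displaymath}
(\sigma_1 + \sigma_2)^2 = (\sigma_1^2 + \sigma_2^2) + 2 \sigma_1 \sigma_2 = \tr(U^t U V^t V) + 2\sqrt{\det(U^t U V^t V)}
\end{displaymath}
reduces the nuclear norm to a $2\times 2$ trace plus a $2\times 2$ determinant, both elementary. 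The trace coincides with the Frobenius norm squared already obtained. The determinant factors as $\det(U^t U) \det(V^t V)$, and the classical Gram formula $\|a\|_2^2 \|b\|_2^2 - (a^t b)^2 = \|a\|_2^2 \|b\|_2^2 \sin^2 \theta$ gives each factor explicitly, so $\sqrt{\det(U^t U V^t V)} = \|u_1\|_2 \|u_2\|_2 \|v_1\|_2 \|v_2\|_2 \sin \theta_u \sin \theta_v$, with the sines nonnegative thanks to the convention $\theta_u, \theta_v \in [0, \pi]$. Applying the identity $\cos\theta_u \cos\theta_v + \sin\theta_u \sin\theta_v = \cos(\theta_u - \theta_v)$ collects everything into the claimed formula.

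The main obstacle is the transition from $\sigma_1^2 + \sigma_2^2$ (which the Frobenius computation delivers) to $\sigma_1 + \sigma_2$: the extra cross term $2\sigma_1 \sigma_2 = 2\sqrt{\det(U^t U V^t V)}$ is exactly where the additional $\sin\theta_u \sin\theta_v$ contribution enters, and it is the reason the nuclear norm sees the \emph{difference} $\theta_u - \theta_v$ rather than the \emph{product} of cosines seen by the Frobenius norm. The identity $(\sigma_1 + \sigma_2)^2 = \tr + 2\sqrt{\det}$ relies crucially on $M$ having rank exactly two; for higher rank the square-root-of-determinant no longer equals the product of singular values, consistent with the paper's remark that this is precisely why no simple closed formula exists in general.
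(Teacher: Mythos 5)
Your proof is correct, and at its core it is the same argument as the paper's: reduce the problem to a $2\times 2$ matrix, express $(\sigma_1+\sigma_2)^2$ as $\mbox{trace} + 2\sqrt{\mbox{determinant}}$, identify the trace with the squared Frobenius norm and the determinant with the product of two Gram determinants, and finish with $\cos\theta_u\cos\theta_v+\sin\theta_u\sin\theta_v=\cos(\theta_u-\theta_v)$. Where you differ is in how you reach the $2\times 2$ reduction. The paper runs Gram--Schmidt on $\{u_1,u_2\}$ and on $\{v_1,v_2\}$, writes out the $2\times 2$ representation of $G^tG$ entrywise in those orthonormal bases, and then computes its trace and determinant by brute force (verified symbolically). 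You instead factor $M=UV^t$ with $U=[\,u_1\;u_2\,]$, $V=[\,v_1\;v_2\,]$, invoke the fact that $M^tM=VU^tUV^t$ and $U^tUV^tV$ share their nonzero spectrum, and then use multiplicativity of the $2\times 2$ determinant to get $\det(U^tU)\det(V^tV)$ directly. That sidesteps the explicit orthonormalization and the messy intermediate matrix entirely, and makes the appearance of the two Gram determinants transparent rather than ``remarkable,'' as the paper puts it. The only points worth tightening: (i) the eigenvalues of $U^tUV^tV$ are real and nonnegative because the matrix is a product of two positive semidefinite matrices, which is what licenses $\sqrt{\det(U^tUV^tV)}=\sigma_1\sigma_2$; (ii) when the rank drops below $2$ one or both eigenvalues vanish and the identity degenerates gracefully, so no separate case is needed beyond the trivial one where a vector is zero. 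Your closing remark attributing the absence of a general closed formula to the failure of $\sqrt{\det}$ at higher rank is not quite what the paper says --- its point is that the \emph{spectral} dual norm retains the awkward $\cos(\theta_u-\theta_v)$ term in the subsequent maximization, whereas the Frobenius version does not --- but that is a side comment and does not affect the proof.
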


\begin {proof}
If any of the vectors vanish, then the formulas are clearly true, so it may be assumed that the vectors are nonzero. The strategy of the proof is to represent the rank $2$ matrix as a $2 \times 2$ matrix whose singular values can be calculated. Since singular values are wanted, it is necessary that the bases for the $2 \times 2$ representation be orthonormal. 

To that end, let $w_1$ and $w_2$ be orthogonal unit vectors with $u_1 = \alpha_1 w_1$ and $u_2 = \alpha_2 w_1 + \beta w_2$. The coefficients' signs are indeterminate, so without loss of generality assume $\alpha_1 \ge 0$ and $\beta \ge 0$, in which case
\begin {displaymath}
\alpha_1 = \| u_1 \|_2
\qquad
\alpha_2 = {u_1^t u_2^{} \over \| u_1 \|_2}
\qquad
\beta = \left\| u_2 - \left( {u_1^t u_2^{} \over \| u_1 \|_2} \right)
\left( {u_1 \over \| u_1 \|_2} \right) \, \right\|_2 \, .
\end {displaymath} Similarly, let $x_1$ and $x_2$ be mutually orthogonal
unit vectors with $v_1 = \gamma_1 x_1$ and $v_2 = \gamma_2 x_1 + \delta
x_2$. Again without loss of generality $\gamma_1 \ge 0$ and $\delta \ge 0$
so that
\begin {displaymath}
\gamma_1 = \| v_1 \|_2
\qquad
\gamma_2 = {v_1^t v_2^{} \over \| v_1 \|_2}
\qquad
\delta = \left\| v_2 - \left( {v_1^t v_2^{} \over \| v_1 \|_2} \right)
\left( {v_1 \over \| v_1 \|_2} \right) \, \right\|_2 \, .
\end {displaymath}
Notice that
\begin {displaymath}
\beta^2 = \| u_2 \|_2^2 - \left( {u_1^t u_2^{} \over \| u_1 \|_2} \right)^2
\qquad
\delta^2 = \| v_2 \|_2^2 - \left( {v_1^t v_2^{} \over \| v_1 \|_2}
\right)^2 \, .
\end {displaymath}
Let $G = u_1^{} v_1^t + u_2^{} v_2^t$. A straightforward calculation shows that, with respect to the orthonormal basis consisting of $x_1$ and $x_2$, the matrix $G^t G$ is represented by the matrix 
\begin {displaymath}
M = \left[ \begin {array} {c c} {\beta }^2\,{{{\gamma }_2}}^2 + 
{\left( {{\alpha }_1}\,{{\gamma }_1} + 
{{\alpha }_2}\,{{\gamma }_2} \right) 
}^2& {\beta }^2\,\delta \,{{\gamma }_2} + 
\delta \,{{\alpha }_2}\,
\left( {{\alpha }_1}\,{{\gamma }_1} + 
{{\alpha }_2}\,{{\gamma }_2} \right)\\ \noalign {\medskip} {\beta
}^2\,\delta \,{{\gamma }_2} + 
\delta \,{{\alpha }_2}\,
\left( {{\alpha }_1}\,{{\gamma }_1} + 
{{\alpha }_2}\,{{\gamma }_2} \right)& {\beta }^2\,{\delta }^2 + 
{\delta }^2\,{{{\alpha }_2}}^2 \end {array} \right] \, .
\end {displaymath}
The desired norms are now given in terms of the eigenvalues of $M$, $\lambda_\pm$,
\begin {displaymath}
\| G \|\sub{F}
=
\sqrt {\lambda_+ + \lambda_-} = \sqrt {\tr (M)}
\quad \mbox {and} \quad
\| G \|_2^*
=
\sqrt {\lambda_+} + \sqrt {\lambda_-} \, .
\end {displaymath}

The expression for $\| G \|_2^*$ requires further analysis. For any $2 \times 2$ matrix $M$,
\begin {displaymath}
\lambda_\pm = {\tr (M) \over 2} \pm \sqrt { \left( {\tr (M) \over 2}
\right)^2 - \det (M)} \, .
\end {displaymath}
In the present case these eigenvalues are real because the $M$ of interest is symmetric, and $\det (M) \ge 0$ because it is also positive semidefinite. Altogether $[\tr (M)]^2 \ge 4 \det (M) \ge 0$, so $\tr (M) \ge 2 \det (M) \ge 0$. These bounds prove the following quantities are real, and it can be verified they are the square roots of the eigenvalues of $M$,
\begin {displaymath}
\sqrt {\lambda_\pm} = \sqrt {{\tr (M) \over 4} + \sqrt {\det (M) \over 4}}
\pm \sqrt {{\tr (M) \over 4} - \sqrt {\det (M) \over 4}} \, ,
\end {displaymath}
thus
\begin {displaymath}
\| G \|_2^*
=
\sqrt {\lambda_+} + \sqrt {\lambda_-}
=
\sqrt {\tr (M) + 2 \sqrt {\det (M)}} \, .
\end {displaymath}

In summary, the desired quantities $\| G \|_2^*$ and $\| G \|\sub{F}^{}$ have been expressed in terms of $\det (M)$ and $\tr (M)$ which the expression for $M$ expands into formulas of $\alpha_i$, $\beta$, $\gamma_i$, and $\delta$. These in turn expand to expressions of $u_i$ and $v_i$. It is remarkable that the ultimate expressions in terms of $u_i$ and $v_i$ are straightforward,
\begin {eqnarray*}
\tr (M)
& =
& \| u_1 \|_2^2 \, \| v_1 \|_2^2 + \| u_2 \|_2^2 \, \| v_2 \|_2^2 + 2 \,
(u_1^t u_2^{}) (v_1^t v_2^{})\\
\noalign {\medskip}
& =
& \| u_1 \|_2^2 \, \| v_1 \|_2^2 + \| u_2 \|_2^2 \, \| v_2 \|_2^2 + 2 \, \|
u_1 \|_2 \, \| v_1 \|_2 \, \| u_2 \|_2 \, \| v_2 \|_2 \, \cos (\theta_u)
\cos (\theta_v)\\
\noalign {\bigskip}
\det (M)
& =
& \left( \| u_1 \|_2^2 \, \| u_2 \|_2^2 - (u_1^t u_2^{})^2 \right) \left(
\| v_1 \|_2^2 \, \| v_2 \|_2^2 - (v_1^t v_2^{})^2 \right)\\
\noalign {\medskip}
& =
& \left( \| u_1 \|_2^2 \, \| u_2 \|_2^2 (\sin (\theta_u))^2 \right) \left(
\| v_1 \|_2^2 \, \| v_2 \|_2^2 (\sin (\theta_v))^2 \right) \, ,
\end {eqnarray*}
where $\theta_u$ is the angle between $u_1$ and $u_2$, and similarly for $\theta_v$. The formula for $\| G \|\sub{F}$ is established. The formula for $\| G \|_2^*$ simplifies, using the difference formula for cosine, to the one in the statement of the lemma. Since the positive root of $\sqrt {\det (M)}$ is wanted, the angles should be chosen from $0$ to $\pi$ so the squares of the sines can be removed without inserting a change of sign. These calculations have been verified with Mathematica \citep {Wolfram2003}. 
\end {proof}

\raggedright
\bibliographystyle {plainnat}
\bibliographystyle {siam}

\end{document}